\documentclass[10pt]{article}

\usepackage[a4paper,margin=1in]{geometry}
\usepackage{amsmath,amssymb,amsthm}
\usepackage[T1]{fontenc}
\usepackage{url}
\usepackage{enumitem}

\numberwithin{equation}{section}

\newtheorem{theorem}{Theorem}[section]
\newtheorem{lemma}[theorem]{Lemma}

\newcommand{\C}{\mathbb C}

\newcommand{\N}{\mathbb N}

\newcommand{\norm}[1]{\left\|#1\right\|}

\title{A Bounded Closed Convex Set in $c_0(\mathbb{C})$ without\\[2pt]
Modulus Support Functionals}
\author{Xin Luo}
\date{}

\begin{document}

\maketitle

\begin{abstract}
We construct a bounded closed convex set in $c_0(\mathbb{C})$ on which no
nonzero continuous complex-linear functional attains its maximum modulus. Consequently, 
this answers in the negative the two longstanding questions raised by
Deville, Godefroy and Zizler in
\cite[p. 35]{DGZ}. The construction uses a small
perturbation of the identity on $c_0(\mathbb{R})$.
\end{abstract}

\medskip
\noindent\textbf{Keywords.} Complex Banach space, modulus support
functional, Bishop--Phelps theorem, convex set.


\section{Introduction}
\indent

For a nonempty bounded closed convex set $K$ in a Banach space $X$, denote
the set of modulus support functionals for $K$ by
\begin{equation}
\label{eq:modulus-support}
S(K):=\left\{f\in X^*\setminus\{0\}: |f(x_0)|=\sup_{x\in K}|f(x)|\text{ for some }x_0\in K\right\}.
\end{equation}
The classical Bishop--Phelps theorem states that $S(K)$ is norm dense in
$X^*$ when $X$ is a real Banach space \cite{BishopPhelps1961,BishopPhelps}. 
For complex Banach spaces, the situation is more subtle. 
Bourgain \cite{Bourgain1977} proved that modulus support functionals are norm
dense when the space has the Radon--Nikod\'{y}m property. In general, however,
Lomonosov \cite{Lomonosov} constructed a bounded closed convex set in a
complex Banach space with no modulus support functionals, but his construction
does not cover the classical sequence space $c_0(\mathbb{C})$,
the space of all complex sequences converging to zero.
Moreover, $c_0(\mathbb{C})$ does not have the
Radon--Nikod\'{y}m property \cite[Chapter VII]{DiestelUhl}, so Bourgain's
theorem does not apply to it. Deville, Godefroy and Zizler
\cite[p. 35]{DGZ} asked whether every nonempty
bounded closed convex set in $c_0(\mathbb{C})$ admits a nonzero modulus
support functional and whether such functionals are norm dense in
$c_0(\mathbb{C})^*$ (see also \cite[Problem 208]{GMZ})?

In $c_0(\mathbb{C})$, Kadets, L\'{o}pez, Mart\'{\i}n and Werner \cite{KLMW}
studied the closed convex hull of the vectors $(z,z^2,z^3,\ldots)$,
$|z|<1$, and Golinskii and Kadets \cite[Theorem 4.1 and Remarks
4.2--4.3]{GK} showed that this set does admit nonzero modulus support
functionals. This provides a positive result for a particular convex set in
$c_0(\mathbb{C})$, but does not resolve the questions above.

In this paper, we construct a bounded closed convex set in
$c_0(\mathbb{C})$ which has no modulus support functionals. Let
\[
 X=c_0(\mathbb{C})
	 :=\left\{x=(x_n)_{n\in\mathbb{N}}\in\mathbb{C}^{\mathbb{N}}:
	 x_n\longrightarrow 0\right\},
 \qquad
 \norm{x}_\infty=\sup_{n\geq 1}|x_n|,
\]
and denote its closed unit ball by $B_X$. By the standard duality
$c_0(\C)^*=\ell_1(\C)$ (see, for example, \cite[p. 76]{Conway}),
we identify $X^*$ with $\ell_1(\C)$ via the complex-linear pairing
\begin{equation}
\label{eq:dual-pairing}
 f_a(x)=\sum_{n=1}^{\infty}a_nx_n,
 \qquad
 a\in\ell_1(\C),\quad x\in X.
\end{equation}
For a nonempty bounded subset $K\subset X$, put
\[
M(K)=\left\{a\in\ell_1(\C):
 |f_a(x_0)|=\sup_{x\in K}|f_a(x)|
 \text{ for some }x_0\in K\right\}.
\]

\begin{theorem}
There exists a bounded closed convex set $\mathcal{C}\subset X$ such that
\begin{equation}
\label{eq:main-conclusion}
 \mathcal{C}=-\mathcal{C},
 \qquad
 \frac12 B_X\subset\mathcal{C}\subset \frac{\sqrt{13}}2B_X,
 \qquad
 M(\mathcal{C})=\{0\}.
\end{equation}
\end{theorem}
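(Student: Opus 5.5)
The plan is to reduce everything to real support functionals and a one-parameter phase problem. Regard $X$ as the real Banach space $c_0(\mathbb R)\oplus c_0(\mathbb R)$ via $z=u+iv$. Every real continuous functional on $X$ is $\operatorname{Re} f_a$ for a unique $a\in\ell_1(\C)$. For a bounded closed convex set $\mathcal C$ with real support function $h_{\mathcal C}$ we have
\[
\sup_{x\in\mathcal C}|f_a(x)|=\max_{\theta\in[0,2\pi]} h_{\mathcal C}\bigl(\operatorname{Re}(e^{i\theta}f_a)\bigr),
\]
and the maximum over $\theta$ is attained by continuity. Hence $a\in M(\mathcal C)\setminus\{0\}$ holds iff there is a phase $\theta_0$ maximizing $g_a(\theta):=h_{\mathcal C}(\operatorname{Re}(e^{i\theta}f_a))$ such that the real functional $\operatorname{Re}(e^{i\theta_0}f_a)$ attains its supremum on $\mathcal C$. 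So I need a set whose real support functionals exist (Bishop--Phelps forces this) but never occur at a maximizing phase.

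\textbf{Construction.} Take the symmetric ``box'' $D=\{u+iv: u,v\in c_0(\mathbb R),\ \|u\|_\infty\le 1,\ \|v\|_\infty\le 1\}$. Put $\mathcal C=T(D)$, where $T=I+E$ is a real-linear isomorphism of $X$ and $E$ is a small perturbation acting on real and imaginary parts. For instance, $E$ could couple the imaginary part of coordinate $n$ into the real parts of infinitely many later coordinates with summable small weights, so that $\|E\|\le\tfrac12$. Then $\mathcal C$ is closed, convex and symmetric. The inclusions $\tfrac12B_X\subset\mathcal C\subset\tfrac{\sqrt{13}}2B_X$ should come from the following estimates:
\begin{itemize}
\item $\|T^{-1}\|\le 2$ together with $B_X\subset D$ gives $\tfrac12 B_X\subset \mathcal C$.
\item The bounds $|\operatorname{Re}|\le 1$ and $|\operatorname{Im}|\le\tfrac32$ on $T(D)$ give $\mathcal C\subset\sqrt{1+9/4}\,B_X=\tfrac{\sqrt{13}}2B_X$.
\end{itemize}
I would tune $E$ so that exactly these constants appear.

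\textbf{Attainment criterion.} For a real functional $b=(b',b'')\in\ell_1(\mathbb R)^2$ we have $h_D(b)=\sum_n(|b'_n|+|b''_n|)$. This supremum is attained on $D$ iff $b$ is finitely supported, since in $c_0$ one cannot take infinitely many coordinates equal to $\pm1$. Therefore $h_{\mathcal C}(b)=h_D(T^*b)$, and this is attained iff $T^*b$ has finite support. The task thus becomes the following. For every $a\ne0$ and every maximizer $\theta_0$ of
\[
g_a(\theta)=\|T^*(e^{i\theta}a)\|_{\ell_1(\mathbb R)^2},
\]
the vector $T^*(e^{i\theta_0}a)$ must have infinite support. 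Equivalently: whenever $T^*(e^{i\theta_0}a)=c$ is finitely supported, $\theta_0$ is not a maximum of $g_a$.

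\textbf{Main obstacle: the first-variation argument.} Fix a finitely supported $c\ne0$ and set $b=(T^*)^{-1}c$, so that $e^{i\theta_0}a=b$. Rotating gives
\[
g_a(\theta_0+t)=\|c+t\,T^*(ib)+O(t^2)\|_{1},
\]
because rotation by $e^{it}$ acts on real pairs as $I+tJ+O(t^2)$, where $J$ is multiplication by $i$. The one-sided derivatives are
\[
\sum_{c_n\neq 0}\operatorname{sgn}(c_n)\,(T^*Jb)_n \;\pm\; \sum_{c_n=0}|(T^*Jb)_n|.
\]
To rule out a maximum I need the right derivative to be strictly positive or the left derivative strictly negative. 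This holds as soon as
\[
\sum_{c_n=0}|(T^*Jb)_n| \;>\; \Bigl|\sum_{c_n\ne0}\operatorname{sgn}(c_n)(T^*Jb)_n\Bigr|.
\]
The tail $\sum_{c_n=0}|(T^*Jb)_n|$ is where the perturbation acts. Since $b=(T^*)^{-1}c$ with $c$ finite, $Jb$ is a rotated copy, and the operator $T^*J(T^*)^{-1}$ should spread mass onto infinitely many coordinates outside $\operatorname{supp}c$. So the task is to design $E$ so that this ``leakage'' dominates the finite signed sum for every finitely supported $c$, uniformly up to scaling. A Neumann series expansion of $(T^*)^{-1}=\sum(-E^*)^k$ would be used to control the error terms. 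The key design freedom is making $E^*J-JE^*$ map every finitely supported vector to one whose off-support $\ell_1$ mass dominates its on-support part. Checking this inequality for all finite $c$ simultaneously is the step I expect to be hardest.
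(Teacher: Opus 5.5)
Your reduction to real support functionals is correct, but the proof stops where the actual content lies. $E$ is never specified, the leakage property is never verified, and the example you suggest fails.

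If $E$ only feeds coordinate $n$ into \emph{later} coordinates, then $T$ is lower triangular. So for any finitely supported $a$ with $N=\max\operatorname{supp}a$, the value $f_a(Ty)$ depends only on $y_1,\dots,y_N$. These range over a compact product of squares, so $|f_a|$ attains its supremum on $\mathcal C=T(D)$. Hence every finitely supported $a$ lies in $M(\mathcal C)$. For instance, $f(x)=x_1$ attains its maximum modulus $\sqrt2$ at $T(e_1+ie_1)$. In your own language: a lower-triangular coupling makes $E^*$ map finitely supported vectors to finitely supported ones, so there is no leakage at all.

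You also aimed at a needlessly strong target. Let $S$ be the signed on-support sum and $L\ge0$ the off-support mass. A maximum at $\theta_0$ requires $S+L\le0$ and $S-L\ge0$, which forces $L=0$. So the quantitative domination $L>|S|$ is not needed. It suffices that no nonzero real functional $b$ has both $T^*b$ and $T^*Jb$ finitely supported.

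This qualitative property is the missing idea. The paper supplies it with a non-triangular operator acting on real parts only: $\mathcal C=T(B_E)+iB_E$ with $T=I+A$, $(Te_k)_n=\delta_{nk}+t_n^k$ and $t_n=(n+2)^{-1}$. For finitely supported $a$ and $k>\max\operatorname{supp}a$ one gets $f_a(Te_k)=\sum_{n\in F}a_nt_n^k$. This cannot vanish for all large $k$ unless $a=0$: divide by $t_{n_0}^k$ and let $k\to\infty$.

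The paper also bypasses the phase function. It perturbs the maximizer $x_0$ by $\pm\frac{i}{2}e_n$ and $\pm\frac{1}{2}Te_k$ and uses $|z\pm w|\le|z|\Rightarrow w=0$. This shows directly that $a$ and $(f_a(Te_k))_k$ are finitely supported, which is exactly what your corrected first-variation argument would give.
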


Since $M(\mathcal{C})=\{0\}$, both the existence and the density questions
in \cite[Problem 208]{GMZ} have negative answers.

The proof is given in Section 2. We construct a real-linear isomorphism
$T$ of $c_0(\mathbb{R})$ and take $\mathcal{C}=T(B_E)+iB_E$, where $B_E$
is the closed unit ball of $c_0(\mathbb{R})$. If $f_a$
attains its maximum modulus on $\mathcal{C}$, variations
in the two factors force both $(a_n)$ and $(f_a(Te_k))$ to have finite
support. The choice of $T$ makes these two conditions possible only when
$a=0$.

\section{Proof of Theorem 1.1}

Let $E=c_0(\mathbb{R})$, and denote its closed unit ball by $B_E$. We regard
$E$ as the real subspace of $X$ and write $e_n$ for the standard coordinate
vectors. We first construct the operator needed in the proof.

\begin{lemma}
There exists a bounded real-linear isomorphism $T:E\to E$ such that
\begin{equation}
\label{eq:operator-bounds}
 \norm{T}\leq\frac32,
 \qquad
 \norm{T^{-1}}\leq2,
\end{equation}
and the following property holds: if $a\in\ell_1(\C)$ has finite
support and $f_a(Te_k)=0$ for all sufficiently large $k$, then $a=0$.
\end{lemma}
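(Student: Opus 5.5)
The plan is to take $T$ to be a small perturbation of the identity, $T=I+S$ with $\|S\|\le\frac12$, where each $Se_k$ is a multiple of a single basis vector $e_{n_k}$ with $n_k<k$, and every index $n$ occurs as $n_k$ for infinitely many $k$. The norm bounds then come from a Neumann series. The spanning property comes from the fact that, on any fixed finite block of coordinates, infinitely many of the vectors $Te_k$ look like (multiples of) each basis vector.

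\textbf{Construction.} Every integer $k\ge2$ that is even can be written uniquely as $k=2^{\nu}(2m+1)$ with $\nu\ge1$, $m\ge0$. For such $k$ put $n_k=\nu$ and $w_k=2^{-m-2}$, and define $Se_k=w_ke_{n_k}$. For odd $k$ put $Se_k=0$. Since $k\ge2^\nu>\nu$, we have $n_k<k$, and each $n\ge1$ equals $n_k$ for the infinitely many $k=2^n(2m+1)$. Formally $(Sx)_n=\sum_{m\ge0}2^{-m-2}x_{2^n(2m+1)}$ for $x\in E$.

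\textbf{Boundedness.} Since $\sum_m 2^{-m-2}=\frac12$ and every $k$ in this sum exceeds $n$, we get
\[
|(Sx)_n|\le\tfrac12\sup_{k>n}|x_k|\longrightarrow0 .
\]
Hence $S$ maps $E$ into $E$, is real-linear, and satisfies $\|S\|\le\frac12$. Therefore $T=I+S$ obeys $\|T\|\le\frac32$. By the Neumann series, $T$ is invertible on $E$ with $\|T^{-1}\|\le(1-\frac12)^{-1}=2$, which gives \eqref{eq:operator-bounds}.

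\textbf{Spanning property.} Suppose $a\in\ell_1(\C)$ is supported in $\{1,\dots,N\}$ and $f_a(Te_k)=0$ for all $k\ge K$. Fix $n\le N$ and choose $k=2^n(2m+1)$ with $k>\max(N,K)$. Then $(Te_k)_j=\delta_{jk}+w_k\delta_{jn}$, so for this $k$
\[
f_a(Te_k)=\sum_{j\le N}a_j(Te_k)_j=a_n w_k ,
\]
because the $e_k$ term lies outside the support of $a$. Since $w_k\ne0$, this forces $a_n=0$. As $n\le N$ was arbitrary, $a=0$.

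The only step needing care is the boundedness of $S$ on $c_0$. Each $e_n$ is hit by infinitely many columns, so the weights must be summable along each row, and the rows must only see coordinates $k>n$ so that $Sx$ tends to zero. The choice $n_k<k$ with geometric weights handles both requirements at once. Note also that the vectors $Te_k$ are real, so no separate treatment of the real and imaginary parts of $a$ is needed.
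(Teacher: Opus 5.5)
Your proof is correct. It shares the paper's framework ($T=I+S$ with $\|S\|\le\frac12$, and both norm bounds from the Neumann series), but you choose a genuinely different perturbation and therefore give a different argument for the key property.

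The paper uses the dense matrix $(Ae_k)_n=t_n^k$ with $t_n=(n+2)^{-1}$. For finitely supported $a$ and large $k$ this gives $f_a(Te_k)=\sum_{n\in F}a_nt_n^k$, an exponential sum with distinct bases. It then gets $a=0$ by dividing by the dominant term $t_{n_0}^k$ and letting $k\to\infty$.

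Your sparse matrix has at most one off-diagonal entry per column, and each row index $n$ recurs in the infinitely many columns $k=2^n(2m+1)$ with summable weights. This reduces the property to a single evaluation $f_a(Te_k)=a_nw_k$, so no limiting argument is needed. The cost is checking that $Sx\in c_0$. You handle this correctly by noting that row $n$ only sees coordinates $k\ge 2^n>n$. For the paper's $A$, the same fact follows from the uniform bound $|(Au)_n|\le\|u\|_\infty/(n+1)$.

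The paper's choice also gives more than the lemma asks. For nonzero finitely supported $a$, it yields $f_a(Te_k)\neq0$ for all large $k$, whereas for your $T$ this quantity vanishes at every odd $k>N$. Since the proof of Theorem 1.1 only uses that $f_a(Te_k)$ is not eventually zero, both constructions suffice.
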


\begin{proof}
For $n\geq1$, set
\[
 t_n=(n+2)^{-1}
\]
and define
\begin{equation}
\label{eq:operator-A}
 (Au)_n:=\sum_{k=1}^{\infty}t_n^k u_k,
 \qquad u\in E.
\end{equation}
The series is obviously convergent, and
\begin{equation}
\label{eq:A-estimate}
 |(Au)_n|
 \leq
 \frac{t_n}{1-t_n}\norm{u}_\infty
 =\frac{1}{n+1}\norm{u}_\infty.
\end{equation}
It follows that $Au\in E$ and $\norm{A}\leq1/2$. Let $T:=I+A$, then $T$ is invertible, 
and the Neumann series gives
\[
 T^{-1}=\sum_{j=0}^{\infty}(-A)^j,
 \qquad
 \norm{T^{-1}}
 \leq\frac1{1-\norm{A}}
 \leq2.
\]
Since $\norm{T}\leq1+\norm{A}\leq3/2$, the norm estimates follow.

Suppose that $a\neq0$ has finite support and that
$f_a(Te_k)=0$ for all sufficiently large $k$. Put
\[
 F=\{n\in\mathbb{N}_+:a_n\neq0\},
 \qquad n_0=\min F.
\]
By \eqref{eq:operator-A},
\[
 (Te_k)_n=\delta_{nk}+t_n^k.
\]
Thus, for all sufficiently large $k>\max F$,
\[
 0=f_a(Te_k)=\sum_{n\in F}a_nt_n^k.
\]
Dividing by $t_{n_0}^k$ gives
\[
 0=a_{n_0}
 +\sum_{\substack{n\in F\\n>n_0}}
 a_n\left(\frac{t_n}{t_{n_0}}\right)^k.
\]
The sum is finite, and $0<t_n/t_{n_0}<1$ whenever $n>n_0$. Letting
$k\to\infty$ yields $a_{n_0}=0$, a contradiction. This completes
the proof.
\end{proof}

\begin{proof}[Proof of Theorem 1.1]
Let $T$ be the operator in Lemma 2.1, and define
\begin{equation}
\label{eq:convex-body}
 \mathcal{C}=\{Tu+iv:u,v\in B_E\}.
\end{equation}
Since the representation $x=Tu+iv$ is unique, we also have
\begin{equation}
\label{eq:convex-body-characterization}
 \mathcal{C}=\left\{x\in X:
 \norm{T^{-1}\operatorname{Re}x}_\infty\leq1,
 \quad
 \norm{\operatorname{Im}x}_\infty\leq1
 \right\}.
\end{equation}
The maps $x\mapsto T^{-1}\operatorname{Re}x$ and
$x\mapsto\operatorname{Im}x$ are continuous and real-linear. Therefore
$\mathcal{C}$ is closed and convex. The symmetry $\mathcal{C}=-\mathcal{C}$ is immediate.

For $x=Tu+iv\in\mathcal{C}$, \eqref{eq:operator-bounds} gives
\[
 |x_n|^2=|(Tu)_n|^2+|v_n|^2
 \leq\left(\frac32\right)^2+1
 =\frac{13}{4}.
\]
Consequently,
\[
 \mathcal{C}\subset\frac{\sqrt{13}}2B_X.
\]
Conversely, if $\norm{x}_\infty\leq1/2$, then
\[
 \norm{T^{-1}\operatorname{Re}x}_\infty
 \leq2\norm{x}_\infty\leq1,
 \qquad
 \norm{\operatorname{Im}x}_\infty\leq\frac12.
\]
By \eqref{eq:convex-body-characterization}, $x\in\mathcal{C}$. This proves the second inclusions in \eqref{eq:main-conclusion}.

It remains to prove that $M(\mathcal{C})=\{0\}$. Let $a\in M(\mathcal{C})$, and choose
$u_0,v_0\in B_E$ such that, for $x_0=Tu_0+iv_0$,
\[
 |f_a(x_0)|=\sup_{x\in\mathcal{C}}|f_a(x)|.
\]
Put $z=f_a(x_0)$. We will use the following implication: if
$|z+w|\leq|z|$ and $|z-w|\leq|z|$, then $w=0$. Indeed, this follows from
the parallelogram identity
\begin{equation}
\label{eq:parallelogram}
 \frac{|z+w|^2+|z-w|^2}{2}=|z|^2+|w|^2,
 \qquad z,w\in\C.
\end{equation}

Since $v_0\in c_0(\mathbb{R})$, there exists $N_1\in\N$ such that
$|(v_0)_n|\leq1/2$ for $n>N_1$. For every such $n$,
$v_0\pm e_n/2\in B_E$, so
\[
 x_0\pm\frac{i}{2}e_n\in\mathcal{C}.
 \qquad
 \left|z\pm\frac{i}{2}a_n\right|\leq|z|.
\]
The two inequalities, together with \eqref{eq:parallelogram} applied to
$w=ia_n/2$, imply
\begin{equation}
\label{eq:a-finite-support}
 a_n=0\qquad n>N_1.
\end{equation}

Similarly, since $u_0\in c_0(\mathbb{R})$, there exists $N_2\in\N$ such that
$|(u_0)_k|\leq1/2$ for $k>N_2$. Hence
\[
 x_0\pm\frac12Te_k
 =T\left(u_0\pm\frac12e_k\right)+iv_0\in\mathcal{C}
 \qquad k>N_2.
\]
It follows that
\[
 \left|z\pm\frac12f_a(Te_k)\right|\leq|z|,
\]
and these inequalities with \eqref{eq:parallelogram}, applied to
$w=f_a(Te_k)/2$, imply
\begin{equation}
\label{eq:Te-finite-support}
 f_a(Te_k)=0\qquad k>N_2.
\end{equation}

By \eqref{eq:a-finite-support}, $a$ has finite support. Lemma 2.1 and \eqref{eq:Te-finite-support} now imply
$a=0$. Since the zero functional belongs to $M(\mathcal{C})$, we conclude that
$M(\mathcal{C})=\{0\}$.
\end{proof}

\textbf{Statement of AI Use.} The key construction and an initial proof of the main theorem 
were generated by OpenAI's ChatGPT in response to the author's research questions. 
The author subsequently examined the arguments, independently verified the proof, 
and revised the mathematical exposition. 
The author assumes full responsibility for the final manuscript, 
including its mathematical correctness and originality.

\bigskip
\noindent Xin Luo\\
School of Mathematical Sciences\\
Xiamen University\\
Xiamen, China\\
E-mail: \url{xinluo2058@xmu.edu.cn}

\end{document}